\apptocmd{\sloppy}{\hbadness 10000\relax}{}{}
\newtheorem{theorem}{Theorem}[section]
\newtheorem{lemma}[theorem]{Lemma}
\newtheorem{proposition}[theorem]{Proposition}
\newtheorem{cor}[theorem]{Corollary}
\theoremstyle{definition}
\newtheorem{definition}[theorem]{Definition}
\theoremstyle{remark}
\newtheorem{remark}[theorem]{Remark}
\newcommand{\NN}{\mathbb{N}} 
\newcommand{\CC}{\mathbb{C}} 
\newcommand{\mat}[3]{{\mathrm{Mat}\left( #1 \times #2 ; \, #3 \right)}} 
\newcommand{\udisk}{\mathbb{D}} 
\newcommand{\sdisk}{\mathbb{G}} 
\newcommand{\slgrp}{\mathrm{SL}} 
\newcommand{\slalg}{\mathfrak{sl}} 
\newcommand{\sball}{\Omega} 
\newcommand{\hol}{\mathcal{O}} 
\newcommand{\branch}{\mathop{\mathrm{br}}}
\title{Lifting to the spectral ball with interpolation}
\author{Rafael B. Andrist}
\address{Rafael B. Andrist \\ Fachbereich C -- Mathematik \\ Bergische Universit\"at Wuppertal \\ Germany}
\email{rafael.andrist@math.uni-wuppertal.de}
\thanks{Acknowledgements: The author would like to thank Frank Kutzschebauch (University of Bern) and Franc Forstneri\v{c} (University of Ljubljana) for interesting and helpful discussions.}
\subjclass[2010]{30E05 (primary), and 32Q55, 32M10 (secondary)}
\keywords{spectral ball, Nevanlinna-Pick, symmetrized polydisc, Oka principle, interpolation}
\begin{document}

\begin{abstract}
We give necessary and sufficient conditions for solving the spectral Nevanlinna--Pick lifting problem. This reduces the spectral Nevanlinna--Pick problem to a jet interpolation problem into the symmetrized polydisc.
\end{abstract}

\maketitle

\section{Introduction}

The spectral ball is the set of square matrices with spectral radius less than $1$. It appears naturally in Control Theory \cites{robustcontrol-proc, robustcontrol}, but is also of theoretical interest in Several Complex Variables.

\begin{definition}
The \emph{spectral ball} of dimension $n \in \NN$ is defined to be
\[
\sball_n := \{ A \in \mat{n}{n}{\CC} \;:\; \rho(A) < 1 \}
\]
where $\rho$ denotes the spectral radius, i.e.\ the modulus of the largest eigenvalue.
\end{definition}
Note that in dimension $n=1$, the spectral ball is just the unit disc. We will assume throughout the paper that $n \geq 2$.

\smallskip

The Nevanlinna--Pick problem is an interpolation problem for holomorphic functions on the unit disc $\udisk$. The classical \emph{Nevanlinna--Pick problem} for holomorphic functions $\udisk \to \udisk$ with interpolation in a finite set of points has been solved by Pick \cite{Pick} and Nevanlinna \cite{Nevanlinna}. The \emph{spectral Nevanlinna--Pick problem} is the analogue interpolation problem for holomorphic maps $\udisk \to \sball_n$:
\begin{quote}
Given $m \in \NN$ distinct points $a_1, \dots, a_m \in \udisk$, decide whether there is a holomorphic map $F \colon \udisk \to \sball_n$ such that 
\[F(a_j) = A_j, \; j = 1, \dots, m\]
for given matrices $A_1, \dots, A_m \in \sball_n$.
\end{quote}
It has been studied by many authors, in particular by Agler and Young for dimension $n=2$ and generic interpolation points \cites{AglerYoung2000, AglerYoung2004}. Bercovici \cite{Bercovici} has found solutions for $n=2$, and Costara \cites{Costara2} has found solutions for generic interpolation points in higher dimensions. In general, the problem is still open.

The spectral ball $\sball_n$ can also be understood in the following way: Denote by $\sigma_1, \dots, \sigma_n \colon \CC^n \to \CC$ the elementary symmetric polynomials in $n$ complex variables. Let $ \mathrm{EV} \colon \mat{n}{n}{\CC} \to \CC^n$ assign to each matrix a vector of its eigenvalues.
Then we denote by $\pi_1 := \sigma_1 \circ \mathrm{EV}, \dots, \pi_n := \sigma_n \circ \mathrm{EV}$ the elementary symmetric polynomials in the eigenvalues. By symmetrizing we avoid any ambiguities of the order of eigenvalues and obtain a polynomial map $\pi = (\pi_1, \dots, \pi_n)$, symmetric in the entries of matrices in $\mat{n}{n}{\CC}$, actually
\[
\chi_A(\lambda) = \lambda^n + \sum_{j=1}^n (-1)^j \cdot \pi_j(A) \cdot \lambda^{n-j}
\]
where $\chi_A$ denotes the characteristic polynomial of $A$.

Now we can consider the holomorphic surjection $\pi \colon \Omega_n \to \sdisk_n$ of the spectral ball onto the symmetrized polydisc $\sdisk_n := (\sigma_1, \dots, \sigma_n)(\udisk^n)$. A generic fibre, i.e.\ a fibre above a base point with no multiple eigenvalues, consists exactly of one equivalence class of similar matrices. Thus, a generic fibre is actually a $\slgrp_n(\CC)$-homogeneous manifold where the group $\slgrp_n(\CC)$ acts by conjugation. A singular fibre decomposes into several strata which are $\slgrp_n(\CC)$-homogeneous manifolds as well, but not necessarily connected.

Given this holomorphic surjection, it is natural to consider a weaker version of the spectral Nevanlinna--Pick problem, which is called the \emph{spectral {Nevan\-linna}\-{--}Pick lifting problem}:
\begin{quote}
Given $m \in \NN$ distinct points $a_1, \dots, a_m \in \udisk$, and a holomorphic map $f \colon \udisk \to \sdisk_n$ with $f(a_j) = \pi(A_j)$ for given matrices $A_1, \dots, A_m \in \sball_n$,
decide whether there is a holomorphic map $F \colon \udisk \to \sball_n$ such that \[F(a_j) = A_j, \; j = 1, \dots, m.\]
i.e.\ such that the following diagram commutes:
\begin{center}
$
\begin{diagram}
\node[2]{\sball_n} \arrow{s,r}{\pi} \\
\node{\udisk} \arrow{ne,t,--}{F} \arrow{e,t}{f} \node{\sdisk_n} \\
\end{diagram}
\vspace*{-12mm}
$
\\
$
a_1, \dots, a_m \mapsto \pi(A_1), \dots, \pi(A_m)
$
\end{center}
\end{quote}

When this lifting problem is solved, the spectral Nevanlinna--Pick problem reduces to an interpolation problem $\udisk \to \sdisk_n$. In contrast to the spectral ball, the symmetrized polydisc $\sdisk_n$ is a taut domain, and should be more accessible with techniques from hyperbolic geometry.
Solutions to this lifting problem have been found for dimensions $n = 2, 3$ by Nikolov, Pflug and Thomas \cite{TwoThree} and recently for dimensions $n = 4, 5$ by  Nikolov, Thomas and Tran \cite{locallift}. They also provide the solution to a localised version of the spectral Nevanlinna--Pick lifting problem:

\begin{proposition}[\cite{locallift}*{Proposition 7}]
\label{locallifting}
Let $f \in \hol(U, \sdisk^n)$, where $U$ is a neighborhood of $p \in \udisk$, and let $M \in \sball_n$.
Then there exists a neighborhood $U^\prime \subset U$ of $p$ and
$F \in \hol(U^\prime, \sball_n)$ such that
$\pi \circ F = f$, $F(p) = M$ and $F(v)$ is cyclic for $v \in U^\prime \setminus \{p\}$ if and only if
\begin{equation}
\label{localcondition}
\tag{$\ast$}
\left.\frac{d^k}{d \lambda^k} {{\mbox{\Large$\chi$}}}_{f(v)}(\lambda)\right|_{\lambda = \lambda_j} = O((v - p)^{d_{n_j-k}(B_j)}), \quad 0 \leq k \leq n_j - 1, \; 1 \leq j \leq s
\end{equation}
where $B_1 \in \mat{n_1}{n_1}{\CC}, \dots, B_s \in \mat{n_s}{n_s}{\CC}$ are the Jordan blocks of $M \simeq B_1 \oplus \dots \oplus B_s$ and 
\[
\begin{split}
d_\ell(B_j) = \min\Big\{ d \in \NN \,:\; \exists x_1, \dots x_d \in \CC^n\, \text{s.t.} \\ \dim \mathop{\mathrm{span}_\CC}\left\{ B^{k_1} x_1, \dots B^{k_d} x_d; \, k_1, \dots, k_d \geq 0\right\} \geq \ell \Big\}
\end{split}
\]
\end{proposition}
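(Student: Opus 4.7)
The plan is to prove the two directions of the equivalence separately, with the principal tool being the fact that a cyclic matrix is conjugate to the companion matrix of its characteristic polynomial.

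For necessity, I would begin with a given $F \in \hol(U', \sball_n)$ satisfying $\pi \circ F = f$, $F(p) = M$, and $F(v)$ cyclic for $v \neq p$. On $U' \setminus \{p\}$ one can write $F(v) = S(v)\,C(f(v))\,S(v)^{-1}$, where $C(f(v))$ is the companion matrix built from the coefficients of $\chi_{f(v)}$ and $S$ is locally holomorphic into $\mathrm{GL}_n(\CC)$. As $v \to p$ the family $S(v)$ must degenerate so that $F(v)$ converges to $M$ rather than to some generic cyclic matrix. This forces the eigenvalues of $F(v)$ clustered at each $\lambda_j$ to coalesce at rates dictated by the partition of $n_j$ encoded in the Jordan structure of $B_j$. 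Translating those rates into vanishing orders of the derivatives $(d/d\lambda)^k \chi_{f(v)}(\lambda)$ at $\lambda_j$, together with the Krylov-subspace dimension count that defines $d_\ell$, produces precisely the bounds $O((v - p)^{d_{n_j - k}(B_j)})$ in (*).

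For sufficiency, I would construct $F$ blockwise. First, apply Weierstrass preparation to factor $\chi_{f(v)}(\lambda) = \prod_{j=1}^s Q_j(v, \lambda)$ near each $(p, \lambda_j)$, with $Q_j$ a distinguished polynomial in $\lambda$ of degree $n_j$. For each $j$, construct a holomorphic block $F_j(v)$ of size $n_j$ satisfying $\chi_{F_j(v)} = Q_j(v, \cdot)$ and $F_j(p) = B_j$: take the companion matrix $C_j(v)$ of $Q_j$ and conjugate by a holomorphic family $S_j(v)$ on $U' \setminus \{p\}$ engineered so that $S_j C_j S_j^{-1}$ extends holomorphically across $p$ with value $B_j$. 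The condition (*) is exactly what allows this extension, because the vanishing orders $d_{n_j - k}(B_j)$ measure the minimal rate at which $C_j(v) - C_j(p)$ can decay while still being absorbed by a holomorphic conjugation. Finally, set $F(v) = F_1(v) \oplus \cdots \oplus F_s(v)$, followed if necessary by a generic small holomorphic conjugation to ensure cyclicity of $F(v)$ for $v \neq p$.

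The main obstacle is the construction of $S_j$ and the accompanying verification that (*) is the exact vanishing condition that makes this work. This is a delicate local problem in the geometry of the $\slgrp_n(\CC)$-conjugation action near the non-cyclic matrix $M$, and the combinatorial invariants $d_\ell$ are tailor-made to encode the Krylov-subspace obstructions: by counting how many generators are needed to span a given-dimensional subspace, they control which holomorphic perturbations of the coefficients of $Q_j$ can be matched to holomorphic perturbations of matrix entries near $B_j$. Once each block is built, the direct-sum step is routine, since distinct spectra imply cyclicity of the sum, so the whole proposition hinges on this local extension lemma.
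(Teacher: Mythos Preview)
This proposition is not proved in the paper at all: it is quoted verbatim as Proposition~7 of the cited reference \cite{locallift} (Nikolov, Thomas, Tran), and the paper uses it as a black box in the proof of Theorem~\ref{mainthm}. The only commentary the paper offers is the Remark immediately following the statement, which says that the necessity of condition~\eqref{localcondition} is easy, being a direct consequence of the factorisation $\pi \circ F = f$ on the jet level. There is therefore no ``paper's own proof'' to compare your proposal against.

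As for your outline itself: it is a reasonable sketch of how the argument in \cite{locallift} goes, but it is not a proof. In the necessity direction, writing $F(v) = S(v)\,C(f(v))\,S(v)^{-1}$ with $S$ holomorphic is already problematic---cyclicity guarantees pointwise conjugacy to the companion matrix, but a holomorphic choice of $S$ on the punctured neighbourhood is not automatic, and the phrase ``$S(v)$ must degenerate so that $F(v)$ converges to $M$'' is a description of what has to happen rather than an argument. In the sufficiency direction you yourself identify the gap: the construction of the conjugating families $S_j$ so that $S_j C_j S_j^{-1}$ extends holomorphically across $p$ with value $B_j$ is the entire content of the result, and you have only asserted that condition~\eqref{localcondition} is ``exactly what allows this extension'' without carrying it out. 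The actual argument in \cite{locallift} does this block by block via an explicit, rather combinatorial construction tied to the partition underlying each $B_j$; your proposal correctly locates where the work lies but does not do it.
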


A matrix $M \in \mat{n}{n}{\CC}$ is called \emph{cyclic}, if it admits a cyclic vector $x \in \CC^n$, i.e.\ $\mathop{\mathrm{span}_\CC} \left\{ M^k \cdot x, \; k \in \NN\right\} = \CC^n$. This is equivalent to saying that there is only one Jordan block per eigenvalue in the Jordan block decomposition of $M$.

\begin{remark}
It is easy to see that the condition \eqref{localcondition} is necessary: it is a direct consequence of the factorisation $\pi \circ F = f$ on the jet level, explicitly stated using the Jordan block decomposition.
\end{remark}

In this paper we prove that the solution of the localised spectral Nevanlinna--Pick lifting problem implies the global one:

\begin{theorem}
\label{mainthm}
The spectral Nevanlinna--Pick lifting problem can be solved if and only if it can be solved locally around the interpolation points which means that \eqref{localcondition} holds for each interpolation point.
\end{theorem}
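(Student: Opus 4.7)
The necessity of \eqref{localcondition} at each $a_j$ was observed in the remark preceding the statement: it is a direct jet-level consequence of the factorization $\pi \circ F = f$ together with $F(a_j) = A_j$.

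For sufficiency, assume \eqref{localcondition} holds at each $a_j$. Then Proposition~\ref{locallifting} yields pairwise disjoint open discs $U_j \ni a_j$ in $\udisk$ and holomorphic local lifts $F_j \in \hol(U_j, \sball_n)$ with $F_j(a_j) = A_j$, $\pi \circ F_j = f|_{U_j}$, and $F_j(v)$ cyclic for $v \in U_j \setminus \{a_j\}$. Let $\sball_n^{\mathrm{cyc}} \subset \sball_n$ denote the open subset of cyclic matrices; the restriction $\pi \colon \sball_n^{\mathrm{cyc}} \to \sdisk_n$ is a holomorphic fiber bundle whose fibers are single $\slgrp_n(\CC)$-conjugacy classes of cyclic matrices, which are complex-homogeneous spaces and hence Oka manifolds. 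Applying the Oka--Grauert principle to the pullback bundle $f^{\ast}\sball_n^{\mathrm{cyc}} \to \udisk$ over the contractible Stein disc produces a global holomorphic cyclic lift $F_0 \colon \udisk \to \sball_n^{\mathrm{cyc}}$ with $\pi \circ F_0 = f$.

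To merge $F_0$ with the $F_j$, choose smaller discs $V_j \Subset U_j$ and set $U_0 := \udisk \setminus \bigcup_j \overline{V_j}$, so that $\{U_0, U_1, \dots, U_m\}$ is an open cover of $\udisk$. On each overlap $U_j \cap U_0 = U_j \setminus \overline{V_j}$, both $F_j$ and $F_0$ are cyclic lifts of $f$, hence related by conjugation by a holomorphic $h_j \in \hol(U_j \cap U_0, \slgrp_n(\CC))$ satisfying $F_j = h_j F_0 h_j^{-1}$. The collection $\{h_j\}_{j=1}^m$ is an $\slgrp_n(\CC)$-valued cocycle on this cover, defining a holomorphic principal $\slgrp_n(\CC)$-bundle over $\udisk$; topologically trivial since $\udisk$ is contractible, and hence by the Oka--Grauert theorem holomorphically trivial. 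This produces holomorphic $g_j \in \hol(U_j, \slgrp_n(\CC))$ and $g_0 \in \hol(U_0, \slgrp_n(\CC))$ with $h_j = g_j g_0^{-1}$ on each overlap. Defining
\[
F(v) := \begin{cases} g_j(v)^{-1} F_j(v) g_j(v), & v \in U_j, \\ g_0(v)^{-1} F_0(v) g_0(v), & v \in U_0, \end{cases}
\]
yields a well-defined global holomorphic lift $F \colon \udisk \to \sball_n$ of $f$, with $F(a_j) = g_j(a_j)^{-1} A_j g_j(a_j)$. A final adjustment $F' := H F H^{-1}$, with $H \colon \udisk \to \slgrp_n(\CC)$ holomorphic satisfying $H(a_j) = g_j(a_j)^{-1}$ (obtainable by Oka interpolation on the Stein disc since $\slgrp_n(\CC)$ is an Oka manifold), delivers $F'(a_j) = A_j$ as required.

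\textbf{Main obstacle.} The decisive ingredient is the Oka--Grauert principle on $\udisk$, invoked twice: once for the non-principal fiber bundle $\sball_n^{\mathrm{cyc}} \to \sdisk_n$ (requiring that its fibers are Oka, which follows from their complex homogeneity), and once for the principal $\slgrp_n(\CC)$-bundle given by the cocycle $\{h_j\}$. The key technical point is verifying that $\pi \colon \sball_n^{\mathrm{cyc}} \to \sdisk_n$ is a genuine holomorphic fiber bundle with locally trivializable structure group (not merely a stratified holomorphic submersion), so that Oka--Grauert applies; this must be checked by examining the local geometry of $\pi$ via the Jordan form. Once this is in place, the argument runs cleanly: the condition \eqref{localcondition} enters solely through its role in Proposition~\ref{locallifting} to supply the local lifts $F_j$.
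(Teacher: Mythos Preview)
Your approach differs substantially from the paper's. The paper works directly with the pulled-back map $f^\ast(\pi)\colon f^\ast(\sball_n)\to\udisk$, verifies that it is an elliptic submersion away from the non-cyclic locus, uses Proposition~\ref{locallifting} to produce local holomorphic sections near the $a_j$, extends these to a merely \emph{continuous} global section (using only connectedness of the fibres, Lemma~\ref{connected}), and then invokes Forstneri\v{c}'s Oka theorem for branched maps (Theorem~\ref{Okabranched}) once to deform this to a holomorphic section tangent to the local data at each $a_j$. There is no separate global cyclic lift, no cocycle, and no final conjugation.

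Your route has a genuine gap exactly at the point you single out as the ``key technical point''. The map $\pi\colon\sball_n^{\mathrm{cyc}}\to\sdisk_n$ is \emph{not} a locally trivial holomorphic fibre bundle. Already for $n=2$ the fibres are not homeomorphic: over a base point with distinct eigenvalues the fibre is a smooth affine quadric in $\CC^3$, which is simply connected, whereas over a point on the discriminant the fibre is the set of nonzero nilpotent $2\times 2$ matrices, a quadric cone minus its vertex, with fundamental group $\mathbb{Z}/2\mathbb{Z}$. (Equivalently, the centralizer of a cyclic matrix jumps from $(\CC^\ast)^n$ to a group with unipotent part as eigenvalues collide.) So the Oka--Grauert principle for fibre bundles does not apply as stated. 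The conclusion you need, however, is available for free: the companion-matrix section $c\colon\sdisk_n\to\sball_n^{\mathrm{cyc}}$ is globally holomorphic, and $F_0:=c\circ f$ is a global cyclic lift with no Oka theory required.

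Two further steps need more than you give them. First, the existence of a \emph{holomorphic} conjugator $h_j$ on the annulus $U_j\cap U_0$ is not immediate from pointwise similarity of cyclic matrices; one way is to note that the intertwiner spaces $\{h\in\mat{n}{n}{\CC}:hF_0(v)=F_j(v)h\}$ form a rank-$n$ holomorphic subbundle of the trivial bundle (hence trivial over the Stein annulus by Grauert) and then produce a holomorphic section avoiding the determinant hypersurface, which itself needs an Oka-type avoidance argument. Second, in your final adjustment you need $H(a_j)=g_j(a_j)$, not $g_j(a_j)^{-1}$, so that $H(a_j)F(a_j)H(a_j)^{-1}=A_j$. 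With these repairs your cocycle argument can be pushed through, but the paper's single application of Theorem~\ref{Okabranched} sidesteps all of this.
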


As a by-product of the proof we obtain (see Corollary \ref{noSPSH}) that the spectral ball admits no bounded from above strictly plurisubharmonic functions, but non-constant bounded holomorphic functions.

\pagebreak

\section{Oka Theory}

In this section we provide some background in Oka theory that will be needed in the proof of the main theorem. The goal is to find conditions for the existence of holomorphic liftings or holomorphic sections. We first recall the following equivalence:

\begin{remark}
\label{pullbacklifting}
For a holomorphic surjection $\pi \colon Z \to X$ and a holo\-morphic map $f \colon Y \to X$, we denote the pull-back of $Z$ under $f$ by
\begin{equation*}
f^\ast(Z) = \left\{ (y, z) \in Y \times Z \;:\; f(y) = \pi(z) \right\}
\end{equation*}
Together with the natural projections to the first resp. second factor $f^\ast(\pi) \colon f^\ast(Z) \to Y$ resp. $f^\ast(Z) \to Z$ we obtain the following commutative diagram
\[
\begin{diagram}
\node{f^\ast(Z)} \arrow[2]{e,t}{} \arrow{s,l}{f^\ast(\pi)} \node[2]{Z} \arrow{s,r}{\pi} \\
\node{Y} \arrow{e,=} \node{Y} \arrow{e,t}{f} \arrow{ne,t,--}{F} \arrow{nw,t,--}{f^\ast(F)} \node{X}
\end{diagram}
\]
We see that the existence of the lifting $F$ is equivalent to the existence of the lifting $f^\ast(F), \, y \mapsto (y, F(y))$. Thus, the lifting problem can always be reduced to finding a section of $f^\ast(Z) \to Y$.
\end{remark}

The crucial ingredient will be an Oka theorem for branched holomorphic maps due to Forstneri\v{c}.
%
%
From \cite{Okamulti} we recall the following definitions:
\begin{definition}
A point $z \in Z$ is a \emph{branching point} of a holomorphic surjection between complex spaces $\pi \colon Z \to X$ if $\pi$ is not submersive in $z$. The \emph{branching locus} of $\pi$ is denoted by $\mathop{\mathrm{br}} \pi$ and consists of all branching points. For maps between complex spaces, also $Z_{\mathrm{sing}}$ and $\pi^{-1}\left(X_{\mathrm{sing}}\right)$ are included in $\mathop{\mathrm{br}} \pi$ by convention.
\end{definition}

\begin{definition}
A holomorphic surjection $\pi \colon Z \to X$ between complex spaces is called an \emph{elliptic submersion} over an open subset $V \subseteq X_{\mathrm{reg}}$ if
\nopagebreak
\begin{enumerate}
\item $\pi|\pi^{-1}(V)$ is a submersion of complex manifolds
\item each point $x \in V$ has an open neighborhood $U \subseteq V$ such that there exists a holomorphic vector bundle $E \to \pi^{-1}(U)$ together with a so-called holomorphic \emph{dominating spray} $s \colon E \to \pi^{-1}(U)$ satisfying the following conditions for each $z \in \pi^{-1}(U)$:
\begin{enumerate}
\item $s(E_z) \subseteq Z_{\pi(z)}$
\item $s(O_z) = z$
\item the derivative $d s \colon T_{0_z} E \to T_z Z$ maps the subspace $E_z \subseteq T_{0_z} E$ surjectively onto the vertical tangent space $\mathop{\mathrm{ker}} d_z \pi$.
\end{enumerate}
\end{enumerate}
\end{definition}

A examination of the proof in \cite{Okamulti} shows that a small variation of the statement of \cite{Okamulti}*{Theorem 2.1} is valid with exactly the same proof:
\begin{theorem}
\label{Okabranched}
Let $X$ be a Stein space and $\pi \colon Z \to X$ be a holomorphic map of a complex space $Z$ onto $X$. Assume that $Z \setminus \branch{\pi}$ is an elliptic submersion over $X$.
Let $X_0$ be a (possibly empty) closed subvariety of $X$.
Given a continuous section $F \colon X \to Z$ which is holomorphic in an open set containing $X_0$ and such that $F(X \setminus X_0) \subseteq Z \setminus \branch{\pi}$,
there is for any $k \in \NN$ a homotopy of continuous sections $F_t \colon X \to Z, \; t \in [0, 1],$ such that $F_0 = F$, for each $t \in [0, 1]$ the section $F_t$ is holomorphic in a neighborhood of $X_0$ and tangent to order $k$ along $X_0$, and $F_1$ is holomorphic on $X$.
If $F$ is holomorphic in a neighborhood of $K \cup X_0$ for some compact, holomorphically convex subset $K$ of $X$ then we can choose $F_t$ to be holomorphic in a neighborhood of $K \cup X_0$ and to approximate $F = F_0$ uniformly on $K$.
\end{theorem}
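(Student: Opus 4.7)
The plan is to reproduce the argument of \cite{Okamulti}*{Theorem 2.1} and verify that the additional requirements (interpolation along $X_0$ to order $k$, and uniform approximation on $K$) can be carried along with only cosmetic additions to the original proof. Since the statement is specifically advertised as a minor variant, the proof proposal amounts to tracking where the refinements enter.

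First I would exhaust $X$ by a normal sequence of $\hol(X)$-convex compact sets $K = K_0 \subseteq K_1 \subseteq K_2 \subseteq \cdots$ with $\bigcup_j K_j = X$. The homotopy $F_t$ is then built by a convergent sequence of small modifications, each supported in a Stein neighborhood of one $K_{j+1}$, together with the usual reparametrisation of the construction parameter to produce a continuous family on $t \in [0,1]$. The core inductive step exploits the elliptic submersion structure: over a suitably small Stein neighborhood $U$ of each $K_{j+1}$ one assembles finitely many local dominating sprays from condition (2) of the definition into a single dominating spray $s \colon E \to \pi^{-1}(U)$ by composition. Rephrased via the pull-back construction of Remark \ref{pullbacklifting}, we are after a section of a bundle over a Stein base, and the dominating spray lifts this to finding a holomorphic section of a genuine holomorphic vector bundle. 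Oka--Weil approximation together with Cartan's Theorem B supplies a holomorphic section of $E$ approximating the continuous one uniformly on $K_j$, and post-composing with $s$ yields a holomorphic lifting that agrees with $F$ up to a small error on $K_j$.

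The jet interpolation along $X_0$ is preserved by requiring the modifying sections of $E$ to lie in $\mathcal{I}_{X_0}^{k+1} \otimes E$, where $\mathcal{I}_{X_0}$ is the ideal sheaf of $X_0$. This is admissible because $F$ is already holomorphic in a neighborhood of $X_0$ and the relevant coherent sheaves remain acyclic on the Stein base, so Theorem B still produces the required holomorphic corrections. The approximation on $K$ is obtained by forcing the corrections to shrink geometrically, so that the telescoping sum converges uniformly on compacts to a holomorphic limit. Finally, the branching locus is avoided throughout the construction: by hypothesis $F(X \setminus X_0) \subseteq Z \setminus \branch \pi$, and a sufficiently small continuous perturbation preserves this open condition, so the elliptic submersion machinery is applied precisely where it is defined.

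The main obstacle in the underlying argument is the assembly of a global dominating spray over $\pi^{-1}(K_{j+1})$ from local ones, which is exactly the delicate step handled in \cite{Okamulti} via an inductive composition using coherent cut-off functions on the Stein base. The additional bookkeeping for the order-$k$ tangency along $X_0$ and for the uniform estimate on $K$ fits into the standard parametric/interpolation refinement of the Oka principle and introduces no genuinely new difficulty — which is why the author is content to say that ``exactly the same proof'' works.
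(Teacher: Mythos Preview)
Your sketch of the Forstneri\v{c} argument is reasonable, and the paper itself offers no proof beyond asserting that ``exactly the same proof'' as in \cite{Okamulti}*{Theorem 2.1} applies. However, you have misidentified what the variation actually is. According to the remark immediately following the theorem, the jet interpolation to order $k$ along $X_0$ and the uniform approximation on $K$ are \emph{already} part of Forstneri\v{c}'s original statement; these are standard features of Oka-type theorems and require no additional justification here. The sole difference is that in \cite{Okamulti} the subvariety $X_0$ is required to contain $\pi(\branch \pi)$, whereas the present version allows $X_0$ to be strictly smaller, compensated by the hypothesis $F(X \setminus X_0) \subseteq Z \setminus \branch \pi$. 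You do touch on this (``a sufficiently small continuous perturbation preserves this open condition''), and that observation is in fact the entire content of the variation --- but you have buried it as a side detail while devoting most of your effort to features that were never in question.
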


\begin{remark}
The difference to \cite{Okamulti}*{Theorem 2.1} is only that we allow $X_0$ to be smaller than $\pi (\branch \pi)$ provided that the section over $X_0 \setminus \pi (\branch \pi)$ does not hit the branching locus $\branch \pi$.
\end{remark}

%

\section{Proof}

In case of the spectral Nevanlinna--Pick lifting problem, we have $X = \sdisk_n, Y = \udisk, Z = \sball_n$, and $X_0 := \{a_1, \dots, a_m \}$ is a closed subvariety in $\udisk$. We note that $Y$ is indeed Stein and that $\pi$ is an elliptic submersion in the cyclic matrices since the largest strata of all fibres are $\slgrp_n(\CC)$-homogeneous spaces of dimension $n$, see for example \cite{Okabook}*{Example 5.5.13}.
We write down the spray explicitly for any $U \subseteq \sdisk_n$: we choose the bundle $E := \slalg_n(\CC) \times \pi^{-1}(U) \to \pi^{-1}(U)$ with the natural projection, and define the spray $s \colon E \to \pi^{-1}(U)$ as
\[
(B, A) \mapsto \exp(B) \cdot A \cdot \exp(-B)
\]
The derivative in $(0, A)$, evaluated for a tangent vector $C$, is then given by the adjoint representation of $\slalg_n(\CC)$:
\[
d_{(0, A)} s (C) = [A, C]
\]
and the conditions of the elliptic submersion are easily verified.

Now, by remark \ref{pullbacklifting} and the fact that the pullback of an elliptic submersion is still an elliptic submersion (see \cite{Okamulti}*{Lemma 3.1}) we translate the problem of finding a lifting $F \colon X \to Z$ with $\pi \circ F = f$ into a problem of finding a section of $f^{\ast}(\pi) \colon f^{\ast}(\sball_n) \to \udisk$. 

\medskip
Due to a lack of reference, we also include the following lemma and its proof:
\begin{lemma}
\label{connected}
Each fibre, the largest stratum of each fibre and also each connected component of any stratum of each fibre of $\pi \colon \sball_n \to \sdisk_n$ is $\CC$-connected.
\end{lemma}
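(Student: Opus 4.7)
The plan is to reduce all three assertions to the $\CC$-connectedness of $\slgrp_n(\CC)$-orbits under the conjugation action on $\mat{n}{n}{\CC}$, and then to treat the full fibre via a linear deformation of each matrix toward its semisimple part.

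First I would verify that every $\slgrp_n(\CC)$-orbit $\mathcal{O}$ is $\CC$-connected. For any $A \in \mathcal{O}$ and any nilpotent $B \in \slalg_n(\CC)$, the entire curve $t \mapsto \exp(tB) \cdot A \cdot \exp(-tB)$ lies entirely in $\mathcal{O}$. Since $\slgrp_n(\CC)$ is generated by the one-parameter subgroups $\{\exp(tE_{ij}) : t \in \CC\}$ for $i \neq j$, every $g \in \slgrp_n(\CC)$ factors as a finite product $\exp(B_1) \cdots \exp(B_r)$ of exponentials of nilpotents, and concatenating the corresponding entire curves yields a chain from $A$ to $g A g^{-1}$. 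Because each connected component of each stratum of every fibre is an $\slgrp_n(\CC)$-orbit by the homogeneity stated in the introduction, the claim for connected components follows immediately. The largest stratum of any fibre consists of the cyclic matrices with the prescribed characteristic polynomial; since each such matrix is similar to the companion matrix of that polynomial, this stratum is a single orbit, and the claim for the largest stratum follows as well.

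For the full fibre I would use the additive Jordan--Chevalley decomposition $A = D + N$ with $D$ semisimple, $N$ nilpotent, and $[D, N] = 0$. Because $D$ and $N$ commute and $tN$ is nilpotent for every $t \in \CC$, the characteristic polynomial of $D + tN$ coincides with that of $D$, and hence with that of $A$. The entire curve $t \mapsto D + tN$ therefore stays inside the fibre and connects $A$ (at $t = 1$) to its semisimple part $D$ (at $t = 0$). Since any two semisimple matrices with a given characteristic polynomial are conjugate, they lie in a common $\slgrp_n(\CC)$-orbit; chaining two such linear deformations with the orbit-level chain then connects any two points of the fibre. The only moment of thought is the observation that $D + tN$ preserves the characteristic polynomial and so keeps the deformation inside the fibre; the remainder is standard Lie theory.
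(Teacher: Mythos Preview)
Your proof is correct and follows essentially the same strategy as the paper: $\slgrp_n(\CC)$-orbits are $\CC$-connected via chains of conjugation curves $t \mapsto \exp(tQ)\, A\, \exp(-tQ)$, and every matrix in a fibre is then linked to its semisimple part. The only cosmetic difference is that you phrase the second step intrinsically via the Jordan--Chevalley decomposition ($t \mapsto D + tN$), whereas the paper passes to Jordan normal form and introduces a parameter in the off-diagonal entries---after conjugation these are literally the same curve.
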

\begin{proof}
Given any two similar matrices $B$ and $C$, we can connect them through finitely many holomorphic images of complex lines:
there exists a finite number of matrices $Q_1, \dots, Q_s \in \slalg_n(\CC)$ such that
\[
C = \exp(Q_s) \cdots \exp(Q_1) \cdot B \cdot \exp(-Q_1) \cdots \exp(-Q_s).
\]
By appending a complex time parameter $t_s$ in front of each $Q_s$ we obtain $s$ lines connecting $B$ to $C$. This proves that the largest stratum (hence also a generic fibre) or any connected component of a stratum is $\CC$-connected.
For the connectedness of a singular fibre it is now sufficient to connect the strata. This can be done by introducing parameters in the off-diagonal entries of the Jordan-block decomposition.
\end{proof}

We obtain the following corollary which will however not play any role in the proof of the main theorem:

\begin{cor}
\label{noSPSH}
The spectral ball $\sball_n, n \geq 2,$ is a union of immersed complex lines. Hence there exists no bounded from above strictly pluri\-subharmonic function on $\sball_n$.
\end{cor}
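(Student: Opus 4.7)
The plan is to produce a non-constant entire curve $\CC \to \sball_n$ through every point of $\sball_n$, and then rule out bounded-above strictly plurisubharmonic functions via the standard Liouville principle along entire curves.

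For the first half, I would exhibit such a curve through an arbitrary $A \in \sball_n$ in two cases. If $A$ is not scalar, then some $Q \in \slalg_n(\CC)$ satisfies $[Q,A] \neq 0$ (otherwise $A$ would commute with every $E_{ij}$, $i \neq j$, forcing $A$ to be scalar), and the map $t \mapsto \exp(tQ) \cdot A \cdot \exp(-tQ)$ --- already used in Lemma \ref{connected} --- is an entire curve in the fiber $\pi^{-1}(\pi(A))$ whose derivative $\exp(tQ)\cdot[Q,A]\cdot\exp(-tQ)$ never vanishes, hence an immersion everywhere. If $A = \lambda I$ is scalar, I pick any non-zero nilpotent $N \in \mat{n}{n}{\CC}$; since $\lambda I + tN$ has spectrum $\{\lambda\}$ for every $t \in \CC$, the affine line $t \mapsto \lambda I + tN$ is an immersed entire curve in $\sball_n$ through $A$. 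This proves that $\sball_n$ is a union of immersed complex lines.

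For the second half, I would assume toward contradiction that $\varphi \colon \sball_n \to \RR$ is strictly plurisubharmonic and bounded above, and let $\gamma \colon \CC \to \sball_n$ be one of the non-constant entire curves just constructed. Then $\varphi \circ \gamma$ is subharmonic on $\CC$ and bounded above, hence constant by Liouville's theorem for subharmonic functions. But at any $t_0$ with $\gamma'(t_0) \neq 0$, the chain rule for the Levi form yields
\[
\partial \bar\partial (\varphi \circ \gamma)(t_0) = (\partial \bar \partial \varphi)_{\gamma(t_0)}\bigl(\gamma'(t_0),\, \overline{\gamma'(t_0)}\bigr) > 0,
\]
so $\varphi \circ \gamma$ is strictly subharmonic at $t_0$ and cannot be constant --- a contradiction.

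The only mildly delicate point is the observation that the conjugation orbit from Lemma \ref{connected} collapses to a point when $A$ is scalar, so a separate construction (the nilpotent translate, which works precisely because adding a commuting nilpotent preserves the spectrum) is needed there; everything else is routine.
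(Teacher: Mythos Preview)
Your proof is correct and follows the same strategy as the paper: exhibit non-constant entire curves through every point (via the conjugation action from Lemma~\ref{connected}, supplemented by nilpotent translates), then apply Liouville to the pullback of a putative bounded-above strictly plurisubharmonic function. Your treatment is in fact more careful than the paper's one-line proof: you explicitly verify the immersion condition and you isolate the scalar case, which the paper handles only implicitly through the ``parameters in the off-diagonal entries'' clause of Lemma~\ref{connected}.
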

\begin{proof}
The pull-back of any (strictly) plurisubharmonic function to any of these complex lines is (strictly) subharmonic, and Liouville's Theorem applies.
\end{proof}

\bigskip

Now we are ready to prove Theorem \ref{mainthm}:

First we need understand the branching locus of the pull-back of $\pi \colon \sball_n \to \udisk$ by $f$, $f^\ast(\pi) \colon f^\ast(\sball_n) \to \udisk$.
Let $(z, A) \in f^\ast(\sball_n) \subset \udisk \times \sball_n \subset \CC \times \CC^{n^2}$. Assume that $(z, A) \in f^\ast(\sball_n)$ is a singularity. This means that the derivative of the defining equations of $f^\ast(\sball_n)$ has not full rank in $(z, A)$,
\begin{equation*}
\exists v \in \mat{n}{1}{\CC}, v \neq 0, \;:\; \left( f'(z), -d_A \pi \right)^\intercal \cdot v = 0
\end{equation*}
But then also $(d_A \pi)^\intercal \cdot v = 0$ and necessarily $A \in \branch \pi$.
Assume that $(z, A) \in f^\ast(\sball_n)$ is a branching point of $f^\ast(\pi)$ which projects $(z, A)$ to $z$, i.e.\ for $d_{(z, A)} f^\ast(\pi) = (1, 0, \dots, 0)^t$ it holds outside singularities that 
\begin{equation*}
\exists w \in \mat{n}{1}{\CC}, w \neq 0, \;:\; \left( f'(z), -d_A \pi \right)^\intercal \cdot w = (1,0, \dots, 0)^\intercal
\end{equation*}
But then again $(d_A \pi)^\intercal \cdot w = 0$ and necessarily $A \in \branch \pi$.
Hence, $\branch f^\ast(\pi)$ which by definition includes also the singularities of $f^\ast(\sball_n)$, is contained in the pull-back of $\branch \pi$, and no new branching points are introduced.

\smallskip
Theorem \ref{Okabranched} gives the desired section $f^{\ast}(F) = g_1$, once the existence of $g_0$ is clear. Proposition \ref{locallifting} provides the necessary local liftings in each point $a_1, \dots, a_m$ which are pulled back by $f$ to local holomorphic sections such that they do not hit $\branch{f^\ast(\pi)}$ except for possibly the interpolation point itself.
Choosing the neighborhoods for the local sections small enough and contractible, we achieve that $g_0$ is holomorphic in a neighborhood $U$ of $X_0$.

\smallskip
It remains to show that we can extend $g_0$ as continuous section over $\udisk \setminus U$. Since all the components of $U$ are contractible, this is equivalent to find a continuous section $g_0$ which interpolates the points.
We can connect the finitely many points $a_1, \dots, a_m$ by arcs such that the union $\gamma$ of these arcs is homeomorphic to an interval, and then contract the unit disc to $\gamma$. Since all the fibres are connected (even $\CC$-connected by Lemma \ref{connected}) there is no topological obstruction to construct a continuous interpolating section $\gamma \to \sball_n$.

We have proved Theorem \ref{mainthm}.

\begin{bibdiv}
\begin{biblist}

\bib{AglerYoung2000}{article}{
   author={Agler, J.},
   author={Young, N. J.},
   title={The two-point spectral Nevanlinna-Pick problem},
   journal={Integral Equations Operator Theory},
   volume={37},
   date={2000},
   number={4},
   pages={375--385},
   issn={0378-620X},
   review={\MR{1780117 (2001g:47025)}},
   doi={10.1007/BF01192826},
}

\bib{AglerYoung2004}{article}{
   author={Agler, Jim},
   author={Young, N. J.},
   title={The two-by-two spectral Nevanlinna-Pick problem},
   journal={Trans. Amer. Math. Soc.},
   volume={356},
   date={2004},
   number={2},
   pages={573--585 (electronic)},
   issn={0002-9947},
   review={\MR{2022711 (2005d:47032)}},
   doi={10.1090/S0002-9947-03-03083-6},
}

\bib{Bercovici}{article}{
   author={Bercovici, Hari},
   title={Spectral versus classical Nevanlinna-Pick interpolation in
   dimension two},
   journal={Electron. J. Linear Algebra},
   volume={10},
   date={2003},
   pages={60--64},
   issn={1081-3810},
   review={\MR{2001974 (2004g:47019)}},
}

\bib{robustcontrol-proc}{article}{
   author={Bercovici, Hari},
   author={Foias, Ciprian},
   author={Tannenbaum, Allen},
   title={Spectral radius interpolation and robust control},
   conference={
      title={Proceedings of the 28th IEEE Conference on Decision and
      Control, Vol.\ 1--3},
      address={Tampa, FL},
      date={1989},
   },
   book={
      publisher={IEEE, New York},
   },
   date={1989},
   pages={916--917},
   review={\MR{1038978 (91b:93052)}},
}

\bib{robustcontrol}{article}{
   author={Bercovici, Hari},
   author={Foias, Ciprian},
   author={Tannenbaum, Allen},
   title={A spectral commutant lifting theorem},
   journal={Trans. Amer. Math. Soc.},
   volume={325},
   date={1991},
   number={2},
   pages={741--763},
   issn={0002-9947},
   review={\MR{1000144 (91j:47006)}},
   doi={10.2307/2001646},
}
%
%

\bib{Costara2}{article}{
   author={Costara, Constantin},
   title={On the spectral Nevanlinna-Pick problem},
   journal={Studia Math.},
   volume={170},
   date={2005},
   number={1},
   pages={23--55},
   issn={0039-3223},
   review={\MR{2142182 (2006d:30054)}},
   doi={10.4064/sm170-1-2},
}

\bib{Okamulti}{article}{
   author={Forstneri{\v{c}}, Franc},
   title={The Oka principle for multivalued sections of ramified mappings},
   journal={Forum Math.},
   volume={15},
   date={2003},
   number={2},
   pages={309--328},
   issn={0933-7741},
   review={\MR{1956971 (2003m:32015)}},
   doi={10.1515/form.2003.018},
}

\bib{Okabook}{book}{
   author={Forstneri{\v{c}}, Franc},
   title={Stein manifolds and holomorphic mappings},
   series={Ergebnisse der Mathematik und ihrer Grenzgebiete. 3. Folge.
   A Series of Modern Surveys in Mathematics
   },
   volume={56},
   note={The homotopy principle in complex analysis},
   publisher={Springer, Heidelberg},
   date={2011},
   pages={xii+489},
   isbn={978-3-642-22249-8},
   isbn={978-3-642-22250-4},
   review={\MR{2975791}},
   doi={10.1007/978-3-642-22250-4},
}

\bib{Nevanlinna}{article}{
    author={Nevanlinna, Rolf},
    title={\"Uber beschr\"ankte Funktionen, die in gegebenen Punkten vorgeschriebene Werte annehmen.},
    journal={Ann. Acad. Sci. Fenn., Ser. A},
    volume={13},
    date={1920},
}

\bib{TwoThree}{article}{
   author={Nikolov, Nikolai},
   author={Pflug, Peter},
   author={Thomas, Pascal J.},
   title={Spectral Nevanlinna-Pick and Carath\'eodory-Fej\'er problems for
   $n\leq3$},
   journal={Indiana Univ. Math. J.},
   volume={60},
   date={2011},
   number={3},
   pages={883--893},
   issn={0022-2518},
   review={\MR{2985860}},
   doi={10.1512/iumj.2011.60.4310},
}

\bib{locallift}{article}{
   author={Nikolov, Nikolai},
   author={Thomas, Pascal J.},
   author={Tran, Duc-Anh},
   title={Lifting maps from the symmetrized polydisk in small dimensions},
   date={2014},
   eprint={arXiv:1410.8567},
}

\bib{Pick}{article}{
   author={Pick, Georg},
   title={\"Uber die Beschr\"ankungen analytischer Funktionen, welche durch
   vorgegebene Funktionswerte bewirkt werden},
   language={German},
   journal={Math. Ann.},
   volume={77},
   date={1915},
   number={1},
   pages={7--23},
   issn={0025-5831},
   review={\MR{1511844}},
   doi={10.1007/BF01456817},
}

\end{biblist}
\end{bibdiv}

\end{document}